\newtheorem{corollary}{Corollary}[section]
\newtheorem{lemma}{Lemma}[section]
\newtheorem{remark}{Remark}[section]
\numberwithin{equation}{section}
\begin{document}

\title[Corrigendum to ``Weak approximations for Wiener functionals'' ]{Corrigendum to ``Weak Approximations for Wiener Functionals'' [Ann. Appl. Probab. (2013)~\textbf{23}, 4, 1660-1691]}


\author{Dorival Le\~ao}

\address{Departamento de Matem\'atica Aplicada e Estat\'istica, Universidade de S\~ao Paulo, 13560-970, S\~ao Carlos - S\~ao Paulo, Brazil}\email{leao@icmc.usp.br}

\author{Alberto Ohashi}

\address{Departamento de Matem\'atica, Universidade Federal da Para\'iba, 13560-970, Jo\~ao Pessoa - Para\'iba, Brazil}\email{alberto.ohashi@pq.cnpq.br; ohashi@mat.ufpb.br}

\thanks{}
\date{\today}
\keywords{}
\subjclass{}


\maketitle

Unfortunately, the proofs of Theorem 3.1 and Corollary 4.1 in our paper~\cite{LEAO_OHASHI2013} are incomplete. The reason is a wrong statement in Remark 2.2 in \cite{LEAO_OHASHI2013}. It is \textit{not} true that $\delta^k X$ is a square-integrable martingale for every square-integrable Brownian martingale $X$ (see Corollary \ref{AKMAR} below). As a consequence, the proofs of Lemmas 3.4 and 3.5 in \cite{LEAO_OHASHI2013} only cover the case when $\delta^kW$ is a pure jump martingale. Hence, the arguments given in the proofs of Theorem 3.1 and Corollary 4.1 have to be modified. The hypotheses and statements of Theorem 3.1 and Corollary 4.1 in \cite{LEAO_OHASHI2013} remain unchanged. In this note, we provide the correct proof of these results.

\section{Martingale Property of $\delta^kX$}
In the sequel, the notation of \cite{LEAO_OHASHI2013} is employed. Let $\textbf{B}^2(\mathbb{F})$ be the space of c\`adl\`ag $\mathbb{F}$-adapted processes on $\mathbb{R}_+$ such that $\mathbb{E}\sup_{t\ge 0}|X_t|^2< \infty$ and let $\textbf{H}^2(\mathbb{F})$ be the subspace of martingales $X\in \textbf{B}^2(\mathbb{F})$ such that $X_0=0$. For simplicity, we write $\textbf{B}^2$ and $\textbf{H}^2$ when no confusion arises about the filtration. Throughout this note, we fix a positive time $0< T < \infty$. In \cite{LEAO_OHASHI2013}, we have introduced the following operator acting on $\textbf{B}^2$,

$$\delta^kX_t = \sum_{n=0}^\infty\mathbb{E}\big[X_{T^k_n}|\mathcal{G}^k_n\big] 1\!\!1_{\{T^k_n \le t < T^k_{n+1}\}}; 0\le t\le T.$$

At first, let us clarify the $\mathbb{F}^k$-martingale property of $\delta^kX$ when $X\in\textbf{H}^2$. At first, we recall that $T^k_1 < \infty$~a.s so that the strong Markov property yields that $T^k_n < \infty~a.s$ for every $k,n\ge 1$. Let us denote $\Delta T^k_n:=T^k_n-T^k_{n-1}; n\ge 1$. By the very definition, $\mathcal{G}^k_{n} = \sigma(T^k_1,\ldots , T^k_n, \sigma^k_1, \ldots, \sigma^k_n) = \sigma(T^k_1, \Delta T^k_2, \ldots, \Delta T^k_n, \sigma^k_1, \ldots, \sigma^k_n); n\ge 1$. In particular, $\mathcal{G}^k_{1-}:=\mathcal{F}^k_{T^k_1-} = \sigma(T^k_1)$ and
$$\mathcal{G}^k_{n-} := \mathcal{F}^k_{T^k_n-}=\sigma(T^k_1, \Delta T^k_2, \ldots,\Delta T^k_{n-1}, \Delta T^k_n, \sigma^k_1, \ldots, \sigma^k_{n-1}); n\ge 2.$$


\begin{lemma}\label{martlemma}
Let $\{\xi^k_n; n\ge 1\}$ be a sequence of integrable random variables such that $\xi^k_n$ is $\mathcal{G}^k_n$-measurable for each $n\ge 1$. A pure jump process of the form $\sum_{n=1}^\infty \xi^k_n 1\!\!1_{\{T^k_n\le t\} }$ is an $\mathbb{F}^k$-martingale if, and only if, $\mathbb{E}[\xi^k_n|\mathcal{G}^k_{n-}]=0$ a.s for every $n\ge 1$.
\end{lemma}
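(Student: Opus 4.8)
We plan to deduce the equivalence by reducing the process $Y_t:=\sum_{n\ge 1}\xi^k_n 1\!\!1_{\{T^k_n\le t\}}$ to the elementary one-jump processes $Z^n_t:=\xi^k_n 1\!\!1_{\{T^k_n\le t\}}$, and by exploiting that $\mathcal{G}^k_{n-}=\mathcal{F}^k_{T^k_n-}$ is generated by $\mathcal{F}^k_0$ together with the $\pi$-system $\mathcal{C}_n:=\{A\cap\{r<T^k_n\}:\ r\ge 0,\ A\in\mathcal{F}^k_r\}$ (equivalently, by the explicit description recalled above). Two facts will be used repeatedly: since $|Z^n_t|\le|\xi^k_n|$, the process $Z^n$ is automatically uniformly integrable, converges a.s. and in $L^1$ to $Z^n_\infty=\xi^k_n$ (using $T^k_n<\infty$ a.s.) and is closed by $\xi^k_n$; and $Y_{t\wedge T^k_N}=\sum_{n=1}^{N}Z^n_t$ for every $N$, with $T^k_N\uparrow\infty$ a.s.

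For the ``only if'' implication, assume $Y$ is an $\mathbb{F}^k$-martingale. By Doob's optional stopping (with $T^k_0:=0$), $Z^n=Y^{T^k_n}-Y^{T^k_{n-1}}$ is an $\mathbb{F}^k$-martingale, hence a uniformly integrable one closed by $Z^n_\infty=\xi^k_n$. For any generator $A\cap\{r<T^k_n\}\in\mathcal{C}_n$ one has $A\cap\{r<T^k_n\}\in\mathcal{F}^k_r$, so by the martingale property
$$\mathbb{E}\big[\xi^k_n 1\!\!1_{A\cap\{r<T^k_n\}}\big]=\mathbb{E}\big[Z^n_r 1\!\!1_{A\cap\{r<T^k_n\}}\big]=0,$$
because $Z^n_r 1\!\!1_{\{r<T^k_n\}}=\xi^k_n 1\!\!1_{\{T^k_n\le r\}\cap\{r<T^k_n\}}=0$; likewise $\mathbb{E}[\xi^k_n]=\mathbb{E}[Z^n_0]=0$. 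Since $\mathcal{C}_n\cup\{\Omega\}$ is a $\pi$-system generating $\mathcal{G}^k_{n-}$ and $\xi^k_n$ is integrable, the finite signed measure $C\mapsto\mathbb{E}[\xi^k_n 1\!\!1_C]$ vanishes on all of $\mathcal{G}^k_{n-}$; as $\xi^k_n$ is $\mathcal{G}^k_n$-measurable, this says exactly $\mathbb{E}[\xi^k_n\mid\mathcal{G}^k_{n-}]=0$ a.s.

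For the ``if'' implication, assume $\mathbb{E}[\xi^k_n\mid\mathcal{G}^k_{n-}]=0$ for all $n$, and fix $n\ge 1$, $0\le s<t$ and $A\in\mathcal{F}^k_s$. The random variable $1\!\!1_{\{s<T^k_n\le t\}}1\!\!1_A=1\!\!1_{A\cap\{s<T^k_n\}}\cdot 1\!\!1_{\{T^k_n\le t\}}$ is $\mathcal{G}^k_{n-}$-measurable, because $A\cap\{s<T^k_n\}\in\mathcal{C}_n$ and $\{T^k_n\le t\}=\{t<T^k_n\}^c\in\mathcal{G}^k_{n-}$; hence
$$\mathbb{E}\big[(Z^n_t-Z^n_s)1\!\!1_A\big]=\mathbb{E}\big[\xi^k_n 1\!\!1_{\{s<T^k_n\le t\}}1\!\!1_A\big]=\mathbb{E}\big[\mathbb{E}[\xi^k_n\mid\mathcal{G}^k_{n-}]\,1\!\!1_{\{s<T^k_n\le t\}}1\!\!1_A\big]=0.$$
Thus each $Z^n$ is an $\mathbb{F}^k$-martingale, so $Y^{T^k_N}=\sum_{n=1}^{N}Z^n$ is an $\mathbb{F}^k$-martingale for every $N$, i.e. $Y$ is a local $\mathbb{F}^k$-martingale along $(T^k_N)_N$. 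Since the processes under consideration belong to $\textbf{B}^2$, the family $\{Y_t:t\ge 0\}$ is dominated by the integrable $\sup_{s\ge 0}|Y_s|$, and a dominated-convergence argument along $T^k_N\uparrow\infty$ upgrades the local martingale to a genuine $\mathbb{F}^k$-martingale.

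I expect the one point requiring care to be the bookkeeping with $\mathcal{G}^k_{n-}=\mathcal{F}^k_{T^k_n-}$ — identifying the generating $\pi$-system $\mathcal{C}_n$ and checking that the indicators above are $\mathcal{G}^k_{n-}$-measurable — together with the (routine) passage from local to true martingale in the ``if'' direction, which is where the standing $\textbf{B}^2$-integrability is really used. One could also shorten the ``only if'' part by invoking the predictable projection theorem for the closed martingale $Z^n$ at the predictable time $T^k_n$, which yields $\mathbb{E}[\xi^k_n\mid\mathcal{F}^k_{T^k_n-}]=Z^n_{T^k_n-}=0$ at once; the elementary argument above is preferred since it needs neither predictability of $T^k_n$ nor that theorem.
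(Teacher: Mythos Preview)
Your argument is correct and self-contained, whereas the paper simply invokes Prop.~I.1 of Le~Jan~\cite{lejan} together with linearity and gives no details. Your route---reducing to the single-jump processes $Z^n$, identifying $\mathcal{G}^k_{n-}=\mathcal{F}^k_{T^k_n-}$ through the $\pi$-system $\mathcal{C}_n=\{A\cap\{r<T^k_n\}:A\in\mathcal{F}^k_r\}$, and checking the martingale increments against this $\pi$-system---is genuinely different and has the advantage of being elementary and independent of the cited reference; it also makes transparent exactly which measurability facts about $\mathcal{F}^k_{T^k_n-}$ are used. The paper's one-line proof is of course shorter, but it leans on a result the reader must look up.

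One small point worth flagging: in the ``if'' direction you pass from the local martingale property along $(T^k_N)$ to a true martingale by invoking the standing $\textbf{B}^2$ hypothesis. That assumption is not part of the lemma's statement (only integrability of each $\xi^k_n$ is assumed), so strictly speaking your proof establishes a slightly weaker result than announced. This is harmless for the paper---every application lies inside $\textbf{B}^2$---but you should either say so explicitly in the statement or note that some summability condition (e.g.\ $\sum_n|\xi^k_n|1\!\!1_{\{T^k_n\le t\}}\in L^1$ for each $t$) is needed to interchange sum and expectation in $\mathbb{E}[(Y_t-Y_s)1\!\!1_A]=\sum_n\mathbb{E}[\xi^k_n1\!\!1_{\{s<T^k_n\le t\}}1\!\!1_A]$.
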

\begin{proof}
This is an immediate consequence of Prop. I.1 in \cite{lejan} and the linearity of the space of martingales.
\end{proof}
By applying Lemma \ref{martlemma} to the process $\delta^kX$ for $X\in \textbf{H}^2$, we get the following characterization.
\begin{corollary}\label{cormart}
Let $X\in \textbf{H}^2$ be a Brownian  martingale and $X_\infty:=\lim_{t\rightarrow \infty}X_t~a.s$. The process $\delta^kX$ is an $\mathbb{F}^k$-martingale if, only if,
$\mathbb{E}[X_\infty|\mathcal{G}^k_{_n-}] -\mathbb{E}[X_\infty|\mathcal{G}^k_{n-1}]=0~a.s; n\ge 1$.
\end{corollary}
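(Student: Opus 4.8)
The plan is to recognize $\delta^kX$, for $X\in\textbf{H}^2$ a Brownian martingale, as a pure jump process of exactly the form treated in Lemma \ref{martlemma}, and then to read off the martingale characterization from that lemma. The starting observation is that, since $\textbf{H}^2\subseteq\textbf{B}^2$, the martingale $X$ is bounded in $L^2$ and hence uniformly integrable; therefore $X_\infty=\lim_{t\rightarrow\infty}X_t$ exists a.s.\ and in $L^2$, with $X_t=\mathbb{E}[X_\infty|\mathcal{F}_t]$ for all $t\ge 0$. Each $T^k_n$ is an $\mathbb{F}^k$-stopping time, hence an $\mathbb{F}$-stopping time, and $T^k_n<\infty$ a.s.; so optional sampling for uniformly integrable martingales gives $X_{T^k_n}=\mathbb{E}[X_\infty|\mathcal{F}_{T^k_n}]$ a.s. Since $\mathcal{G}^k_n\subseteq\mathcal{F}_{T^k_n}$ (indeed $\mathcal{G}^k_n=\mathcal{F}^k_{T^k_n}$ and $\mathbb{F}^k\subseteq\mathbb{F}$), the tower property yields $\mathbb{E}[X_{T^k_n}|\mathcal{G}^k_n]=\mathbb{E}[X_\infty|\mathcal{G}^k_n]$ for every $n\ge 0$, while $X_0=0$ kills the $n=0$ term in the definition of $\delta^kX$. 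Hence, on $[0,T]$,
$$\delta^kX_t=\sum_{n=1}^\infty\mathbb{E}\big[X_\infty\,\big|\,\mathcal{G}^k_n\big]\,1\!\!1_{\{T^k_n\le t<T^k_{n+1}\}}.$$

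Next I would telescope. Put $\xi^k_n:=\mathbb{E}[X_\infty|\mathcal{G}^k_n]-\mathbb{E}[X_\infty|\mathcal{G}^k_{n-1}]$ for $n\ge 1$, where $\mathcal{G}^k_0$ denotes the trivial $\sigma$-algebra, so that $\mathbb{E}[X_\infty|\mathcal{G}^k_0]=\mathbb{E}[X_\infty]=\mathbb{E}[X_0]=0$. Each $\xi^k_n$ is integrable (as $X_\infty\in L^1$) and $\mathcal{G}^k_n$-measurable (as $\mathcal{G}^k_{n-1}\subseteq\mathcal{G}^k_n$). On the event $\{T^k_m\le t<T^k_{m+1}\}$ one has $\sum_{n\ge 1}\xi^k_n1\!\!1_{\{T^k_n\le t\}}=\sum_{n=1}^m\xi^k_n=\mathbb{E}[X_\infty|\mathcal{G}^k_m]$, which is exactly the value of $\delta^kX_t$ displayed above; here I use that the intervals $\{[T^k_n,T^k_{n+1})\}_{n\ge 0}$ partition $[0,\infty)$, i.e.\ $T^k_n\uparrow\infty$ a.s., a feature of the discretization of \cite{LEAO_OHASHI2013}. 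Therefore $\delta^kX_t=\sum_{n=1}^\infty\xi^k_n1\!\!1_{\{T^k_n\le t\}}$, a pure jump process of precisely the type in Lemma \ref{martlemma}.

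By Lemma \ref{martlemma}, $\delta^kX$ is then an $\mathbb{F}^k$-martingale if and only if $\mathbb{E}[\xi^k_n|\mathcal{G}^k_{n-}]=0$ a.s.\ for every $n\ge 1$. To evaluate this conditional expectation I would invoke the two inclusions that are immediate from the generators recalled just before the statement, namely $\mathcal{G}^k_{n-1}\subseteq\mathcal{G}^k_{n-}\subseteq\mathcal{G}^k_n$ (the first adjoining $\Delta T^k_n$, the second adjoining $\sigma^k_n$). The tower property then gives $\mathbb{E}\big[\mathbb{E}[X_\infty|\mathcal{G}^k_n]\,\big|\,\mathcal{G}^k_{n-}\big]=\mathbb{E}[X_\infty|\mathcal{G}^k_{n-}]$ and $\mathbb{E}\big[\mathbb{E}[X_\infty|\mathcal{G}^k_{n-1}]\,\big|\,\mathcal{G}^k_{n-}\big]=\mathbb{E}[X_\infty|\mathcal{G}^k_{n-1}]$, so that $\mathbb{E}[\xi^k_n|\mathcal{G}^k_{n-}]=\mathbb{E}[X_\infty|\mathcal{G}^k_{n-}]-\mathbb{E}[X_\infty|\mathcal{G}^k_{n-1}]$. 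Equating the right-hand side to $0$ for every $n\ge 1$ is precisely the asserted condition, which proves the corollary.

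I expect the only genuinely delicate point to be the $\sigma$-algebra bookkeeping: that $\mathcal{G}^k_n=\mathcal{F}^k_{T^k_n}$ (so that optional sampling applies in the form used) and that the chain $\mathcal{G}^k_{n-1}\subseteq\mathcal{G}^k_{n-}\subseteq\mathcal{G}^k_n$ holds. Both are immediate from the generator descriptions recalled in the excerpt but deserve an explicit line. Beyond that, the argument is nothing more than the optional sampling theorem for uniformly integrable martingales together with one telescoping identity, so no serious obstacle remains.
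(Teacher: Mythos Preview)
Your proof is correct and follows exactly the approach the paper indicates: the paper's proof of this corollary is the single sentence ``By applying Lemma \ref{martlemma} to the process $\delta^kX$ for $X\in \textbf{H}^2$, we get the following characterization,'' and your argument is a faithful unpacking of that sentence. The details you supply---optional sampling to rewrite $\mathbb{E}[X_{T^k_n}|\mathcal{G}^k_n]$ as $\mathbb{E}[X_\infty|\mathcal{G}^k_n]$, the telescoping to put $\delta^kX$ into the form $\sum_{n\ge 1}\xi^k_n 1\!\!1_{\{T^k_n\le t\}}$, and the use of the inclusions $\mathcal{G}^k_{n-1}\subseteq\mathcal{G}^k_{n-}\subseteq\mathcal{G}^k_n$ to evaluate $\mathbb{E}[\xi^k_n|\mathcal{G}^k_{n-}]$---are exactly what is needed and are all sound.
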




Since $\{T^k_n; n\ge 1\}$ is a sequence of totally inaccessible $\mathbb{F}^k$-stopping times, then Corollary \ref{cormart} implies that Remark 2.2 in \cite{LEAO_OHASHI2013} is \textbf{false}, i.e., $\delta^kX$ is \textbf{not} an $\mathbb{F}^k$-martingale for every Brownian martingale. However, we only need the martingale property of $\delta^kX$ in case $X$ is the Brownian motion.

\begin{corollary}\label{AKMAR}
The process $A^k$ is a square-integrable $\mathbb{F}^k$-martingale and it has the representation $A^k_t=\delta^kB_t =\mathbb{E}[B_T|\mathcal{F}^k_t]; 0\le t\le T$.
\end{corollary}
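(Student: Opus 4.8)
The plan is to verify, in order, that $\delta^k B=A^k$ on $[0,T]$, that $A^k\in\textbf{B}^2(\mathbb{F}^k)$, that $A^k$ is an $\mathbb{F}^k$-martingale, and finally that $A^k_t=\mathbb{E}[B_T|\mathcal{F}^k_t]$. First I would observe that $B_{T^k_n}=\sum_{j=1}^n\epsilon_k\sigma^k_j$ is $\mathcal{G}^k_n$-measurable, so $\mathbb{E}[B_{T^k_n}|\mathcal{G}^k_n]=B_{T^k_n}$ and hence $\delta^k B_t=\sum_{n\ge 0}B_{T^k_n}1\!\!1_{\{T^k_n\le t<T^k_{n+1}\}}$, which telescopes to the pure-jump process $\sum_{n\ge 1}(B_{T^k_n}-B_{T^k_{n-1}})1\!\!1_{\{T^k_n\le t\}}=\sum_{n\ge 1}\epsilon_k\sigma^k_n1\!\!1_{\{T^k_n\le t\}}=A^k_t$ for $0\le t\le T$. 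For square-integrability I would use that, by construction, $|A^k_t-B_t|\le\epsilon_k$ for every $t$ (since $A^k_t=B_{T^k_n}$ whenever $T^k_n\le t<T^k_{n+1}$ and $B$ oscillates by less than $\epsilon_k$ on $[T^k_n,T^k_{n+1})$), so that $\mathbb{E}\sup_{0\le t\le T}|A^k_t|^2\le 2\,\mathbb{E}\sup_{0\le t\le T}|B_t|^2+2\epsilon_k^2<\infty$ by Doob's $L^2$-inequality; thus $A^k\in\textbf{B}^2(\mathbb{F}^k)$.

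To obtain the martingale property I would apply Lemma \ref{martlemma} to $\xi^k_n:=B_{T^k_n}-B_{T^k_{n-1}}=\epsilon_k\sigma^k_n$, which is bounded and $\mathcal{G}^k_n$-measurable, so that it suffices to check $\mathbb{E}[\sigma^k_n|\mathcal{G}^k_{n-}]=0$ a.s. for every $n$. Since $\mathcal{G}^k_{n-}=\mathcal{G}^k_{n-1}\vee\sigma(\Delta T^k_n)$ and the strong Markov property of $B$ at $T^k_{n-1}$ makes the pair $(\Delta T^k_n,\sigma^k_n)$ independent of $\mathcal{G}^k_{n-1}$, one has $\mathbb{E}[\sigma^k_n|\mathcal{G}^k_{n-}]=\mathbb{E}[\sigma^k_n|\Delta T^k_n]$. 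Writing $\widetilde B_u:=B_{T^k_{n-1}+u}-B_{T^k_{n-1}}$ and $\tau:=\inf\{u>0:|\widetilde B_u|=\epsilon_k\}$, we have $(\Delta T^k_n,\sigma^k_n)=(\tau,\epsilon_k^{-1}\widetilde B_\tau)$; since $\widetilde B\overset{d}{=}-\widetilde B$ while $\tau$ is a functional of $|\widetilde B|$, the law of $(\Delta T^k_n,\sigma^k_n)$ is invariant under $\sigma^k_n\mapsto-\sigma^k_n$, whence $\mathbb{E}[\sigma^k_n|\Delta T^k_n]=0$ a.s. By Lemma \ref{martlemma}, $A^k=\delta^k B$ is then a square-integrable $\mathbb{F}^k$-martingale. (Alternatively this matches Corollary \ref{cormart} once one notes $B$ differs from an element of $\textbf{H}^2$ only by stopping at $T$.)

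For the representation I would first note that $\mathbb{F}^k$ is a sub-filtration of the Brownian filtration $\mathbb{F}$ and $B$ is an $\mathbb{F}$-martingale, so the tower property gives $\mathbb{E}[B_T|\mathcal{F}^k_t]=\mathbb{E}[B_t|\mathcal{F}^k_t]$ for $0\le t\le T$, and it remains to show $\mathbb{E}[B_t|\mathcal{F}^k_t]=A^k_t$. Fixing $t$ and $F\in\mathcal{F}^k_t$, I decompose $F=\bigsqcup_{n\ge 0}\big(F\cap\{T^k_n\le t<T^k_{n+1}\}\big)$; since $A^k$ is constant on $[T^k_n,T^k_{n+1})$, on the event $\{T^k_n\le t<T^k_{n+1}\}$ the trace of $\mathcal{F}^k_t$ is generated by $\mathcal{F}^k_{T^k_n}$ together with $\{T^k_{n+1}>t\}$, so $F\cap\{T^k_n\le t<T^k_{n+1}\}=F_n\cap\{T^k_n\le t\}\cap\{T^k_{n+1}>t\}$ for some $F_n\in\mathcal{F}^k_{T^k_n}$. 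Applying the strong Markov property at $T^k_n$ — with $\widetilde B_u:=B_{T^k_n+u}-B_{T^k_n}$ a Brownian motion independent of $\mathcal{F}^k_{T^k_n}$ and $\widetilde\tau:=\inf\{u>0:|\widetilde B_u|=\epsilon_k\}$ — and using $\widetilde B\overset{d}{=}-\widetilde B$, I get $\mathbb{E}\big[(B_t-B_{T^k_n})1\!\!1_{\{T^k_n\le t\}}1\!\!1_{\{T^k_{n+1}>t\}}\,\big|\,\mathcal{F}^k_{T^k_n}\big]=0$, because on $\{T^k_n\le t\}$ it equals $\varphi(t-T^k_n)$ with $\varphi(r):=\mathbb{E}\big[W_r\,1\!\!1_{\{r<\inf\{u:|W_u|=\epsilon_k\}\}}\big]=0$ for a standard Brownian motion $W$. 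Since $B_t-A^k_t=B_t-B_{T^k_n}$ on $\{T^k_n\le t<T^k_{n+1}\}$, multiplying by $1\!\!1_{F_n}$, taking expectations and summing over $n$ gives $\mathbb{E}[(B_t-A^k_t)1\!\!1_F]=0$; as $F\in\mathcal{F}^k_t$ is arbitrary, $A^k_t=\mathbb{E}[B_t|\mathcal{F}^k_t]=\mathbb{E}[B_T|\mathcal{F}^k_t]$.

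I expect the main obstacle to be the structural description of $\mathcal{F}^k_t$ on $\{T^k_n\le t<T^k_{n+1}\}$ — namely the assertion that, because $A^k$ has no jump in $(T^k_n,t]$, the only information carried by $\mathcal{F}^k_t$ beyond $\mathcal{F}^k_{T^k_n}$ on that event is the event $\{T^k_{n+1}>t\}$; once this is granted, the remaining steps are a routine combination of the strong Markov property and the reflection symmetry of Brownian motion.
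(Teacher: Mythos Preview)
Your proof is correct. For the martingale property, both you and the paper reduce to Lemma~\ref{martlemma} and the strong Markov property; the only difference is how the key identity $\mathbb{E}[B_{T^k_n}-B_{T^k_{n-1}}\mid\mathcal{G}^k_{n-}]=0$ is justified. The paper argues that $B_{T^k_1}$ is independent of $\sigma(T^k_1)$ by routing through the counting process $C^k_t=2^{2k}[A^k,A^k]_t$, whereas you use the reflection symmetry $\widetilde B\overset{d}{=}-\widetilde B$ to get $\mathbb{E}[\sigma^k_n\mid\Delta T^k_n]=0$ directly. These are two packagings of the same symmetry fact.

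The genuine difference is in the representation $A^k_t=\mathbb{E}[B_T\mid\mathcal{F}^k_t]$. The paper dispatches this in one sentence (``martingale property of the Brownian motion and the tower property''); the intended argument is that, once $A^k$ is known to be an $\mathbb{F}^k$-martingale, optional sampling at the first $T^k_n$ past $T$ together with $\mathcal{F}^k_t\subset\mathcal{F}_T$ and optional sampling for $B$ yields the identity. You instead verify $\mathbb{E}[B_t\mid\mathcal{F}^k_t]=A^k_t$ by hand, using the trace description of $\mathcal{F}^k_t$ on $\{T^k_n\le t<T^k_{n+1}\}$ and a second appeal to strong Markov plus symmetry. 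Your route is more explicit and self-contained, at the cost of invoking the point-process structure of $\mathcal{F}^k_t$ that you flag as the main obstacle; that structural fact is standard (it underlies Le~Jan's and Jacod--Skorohod's description of jumping filtrations), so the argument goes through. The paper's route is shorter but leans more heavily on optional sampling and leaves the reader to reconstruct the missing steps.
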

\begin{proof}
Let us define $C^k_t:=\max\{n\ge 0; T^k_n \le t\};t\ge 0$. We observe $C^k$ is independent from $B_{T^k_1}$. Indeed, we shall write $C^k_t= 2^{2k}[A^k,A^k]_t; t\ge 0$ and by definition $B_{T^k_1}$ is a Bernoulli variable of the form $B_{T^k_1}= 2^{-k}~\text{if}~\Delta A^k_{T^k_1}>0$ and $B_{T^k_1}=-2^{-k}~\text{if}~\Delta A^k_{T^k_1}< 0$. Then, we clearly see $B_{T^k_1}$ is independent from

$$C^k_t=2^{2k}\sum_{n=1}^\infty|\Delta A^k_{T^k_n}|^2 1\!\!1_{\{T^k_n\le t\}}; t\ge 0.$$
In one hand, for every $t\ge 0$, we have $\{C^k_t= n\} = \{T^k_{n} \le t < T^k_{n+1}\}; n\ge 0.$ On the other hand, $\{T^k_1\le t\} = \cup_{j=1}^{+\infty}\{C^k_t = j\}$ for every $t\ge 0$. In other words, the $\pi$-system $\big\{\{T^k_1\le t\}; t\ge 0\big\}$ which generates $\sigma(T^k_1)$ is independent from $B_{T^k_1}$. Therefore, $B_{T^k_1}$ is independent from $\sigma(T^k_1)$. By applying the strong Markov property, we then have $\mathbb{E}[B_{T^k_n} - B_{T^k_{n-1}}|\mathcal{G}^k_{n-}]=\mathbb{E}[B_{T^k_n} - B_{T^k_{n-1}}]=0; n\ge 1$, and from Lemma \ref{martlemma}, we conclude that $A^k$ is an $\mathbb{F}^k$-martingale. Representation $A^k_\cdot=\mathbb{E}[B_T|\mathcal{F}_\cdot]$ is just a consequence of the martingale property of the Brownian motion and the tower property.

\end{proof}






\section{Compactness of purely discontinuous $\mathbb{F}^k$-martingales}
For a given $X\in \textbf{H}^2$, let $\delta ^kX = M^{k,X} + N^{k,X}$ be the special $\mathbb{F}^k$-special semimartingale decomposition of $\delta^kX$, where $M^{k,X}$ is the martingale component of $\delta^kX$. Let $\textbf{H}^2(\mathbb{F}^k)$ be the space of all square-integrable $\mathbb{F}^k$-martingales starting at zero. From \cite{jacod}, we know that any square-integrable $\mathbb{F}^k$-martingale has bounded variation paths and it is purely discontinuous whose jumps are exhausted by $\cup_{n\ge 1} [[T^k_n,T^k_n]]$. In this case, any $Y^k\in \textbf{H}^2(\mathbb{F}^k)$ can be uniquely written as

\begin{equation}\label{smdec}
Y^k_t = Y^{k,pj}_t - N^{k,Y^k}_t; t\ge 0,
\end{equation}
where $N^{k,Y^k}$ is an $\mathbb{F}^k$-predictable continuous bounded variation process, $Y^{k,pj}_t:=\sum_{0< s\le t}\Delta Y^k_s; t\ge 0$ and $Y^{k,pj}_0 = N^{k,Y^k}_0=0$. From Th. 1 and 2 in \cite{jacod}, we can always write

$$Y^{k,pj}_t = \sum_{n=1}^\infty \Delta Y^k_{T^k_n} 1\!\!1_{\{T^k_n\le t\}}; t\ge 0.$$

As explained in Corollary \ref{cormart}, $\delta ^kW$ may not be an $\mathbb{F}^k$-martingale for a generic $W\in \textbf{H}^2$. Then, Lemma 3.4 and 3.5 in \cite{LEAO_OHASHI2013} may not be true in full generality, i.e., for every $W\in \textbf{H}^2$. However, if $W=B$ is the Brownian motion, then both lemmas are correct because $A^k=\delta^kB$ is a pure jump martingale as demonstrated in Corollary \ref{AKMAR}. In this case, the application of these lemmas based on $A^k$ to Proposition 3.2 in \cite{LEAO_OHASHI2013} is correct. However, in order to prove Theorem 3.1 and Corollary 4.1 in \cite{LEAO_OHASHI2013}, we still need to prove the following lemma.

\begin{lemma}\label{mainlemma}
Let $\delta ^kX = M^{k,X} + N^{k,X}$ be the canonical semimartingale decomposition for a Brownian martingale $X\in \textbf{H}^2$. Then,

\begin{equation}\label{convM}
M^{k,X}\rightarrow X
\end{equation}
weakly in $\textbf{B}^2$ over $[0,T]$ as $k\rightarrow \infty$. Moreover, $\langle X, B\rangle^\delta=[X,B]~\forall X\in \textbf{H}^2$.
\end{lemma}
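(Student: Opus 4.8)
The plan is to prove that the predictable drift $N^{k,X}$ in the canonical decomposition of $\delta^{k}X$ vanishes in $\textbf{B}^{2}$, so that $M^{k,X}$ inherits the (unaffected) convergence $\delta^{k}X\to X$, and then to read off the covariation identity from the same jump‑level computation using the pure‑jump theory that remains valid for $A^{k}=\delta^{k}B$. First I would record, as in the proof of Corollary~\ref{AKMAR}, that the strong Markov property gives $\Delta(\delta^{k}X)_{T^{k}_{n}}=\mathbb{E}[X_{T^{k}_{n}}-X_{T^{k}_{n-1}}\mid\mathcal{G}^{k}_{n}]$, the telescoping error vanishing because $(\Delta T^{k}_{n},\sigma^{k}_{n})$ is independent of $\sigma(\mathcal{G}^{k}_{n-1},X_{T^{k}_{n-1}})$; moreover $\mathcal{G}^{k}_{n-}=\mathcal{G}^{k}_{n-1}\vee\sigma(\Delta T^{k}_{n})$ and, conditionally on $\Delta T^{k}_{n}$, the exit direction $\sigma^{k}_{n}$ is symmetric and independent of $\mathcal{F}_{T^{k}_{n-1}}$, so $\mathbb{E}[\sigma^{k}_{n}\mid\mathcal{F}_{T^{k}_{n-1}},\Delta T^{k}_{n}]=0$. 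Writing $X=\int_{0}^{\cdot}H_{s}\,dB_{s}$ via the martingale representation theorem, these facts yield
\[ \mathbb{E}\big[\Delta(\delta^{k}X)_{T^{k}_{n}}\mid\mathcal{G}^{k}_{n-}\big]=\mathbb{E}\Big[\int_{T^{k}_{n-1}}^{T^{k}_{n}}\big(H_{s}-H_{T^{k}_{n-1}}\big)\,dB_{s}\;\Big|\;\mathcal{G}^{k}_{n-}\Big], \]
so the drift of $\delta^{k}X$ sees only the first‑order discrepancy of $H$ along the partition $\{T^{k}_{n}\}$.

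Next I would show $N^{k,X}\to 0$ in $\textbf{B}^{2}$. Setting $\hat N^{k}_{t}:=\sum_{n}\mathbb{E}[\Delta(\delta^{k}X)_{T^{k}_{n}}\mid\mathcal{G}^{k}_{n-}]\,1\!\!1_{\{T^{k}_{n}\le t\}}$, Lemma~\ref{martlemma} shows that $\delta^{k}X-\hat N^{k}$ is an $\mathbb{F}^{k}$‑martingale and that $N^{k,X}$ is the compensator of $\hat N^{k}$, so that $M^{k,X}-(\delta^{k}X-\hat N^{k})=\hat N^{k}-N^{k,X}$; since the increments $\mathbb{E}[\Delta(\delta^{k}X)_{T^{k}_{n}}\mid\mathcal{G}^{k}_{n-}]$ are $(\mathcal{G}^{k}_{n})_{n}$‑martingale differences, Doob's inequality bounds $\|\hat N^{k}\|_{\textbf{B}^{2}}$ and $\|\hat N^{k}-N^{k,X}\|_{\textbf{B}^{2}}$ — hence $\|N^{k,X}\|_{\textbf{B}^{2}}$ — by a constant times $\big(\mathbb{E}\sum_{n:\,T^{k}_{n-1}\le T}(\mathbb{E}[\Delta(\delta^{k}X)_{T^{k}_{n}}\mid\mathcal{G}^{k}_{n-}])^{2}\big)^{1/2}$. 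By the last display, conditional Jensen, the It\^{o} isometry and optional sampling, this is at most $\big(\mathbb{E}\int_{0}^{T^{k}_{C^{k}_{T}+1}}(H_{s}-\bar H^{k}_{s})^{2}\,ds\big)^{1/2}$ with $\bar H^{k}_{s}:=H_{T^{k}_{n-1}}$ on $[T^{k}_{n-1},T^{k}_{n})$; since $T^{k}_{C^{k}_{T}+1}\to T$ and the mesh of $\{T^{k}_{n}\}$ goes to $0$, a routine Riemann approximation (first for simple integrands, then by density in $L^{2}(d\mathbb{P}\times ds)$) makes it vanish. Hence $N^{k,X}\to 0$ in $\textbf{B}^{2}$, and together with the convergence $\delta^{k}X\to X$ in $\textbf{B}^{2}$ over $[0,T]$ of \cite{LEAO_OHASHI2013} we obtain $M^{k,X}=\delta^{k}X-N^{k,X}\to X$ in $\textbf{B}^{2}$, which is \eqref{convM}.

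For the covariation, I would use that $N^{k,X}$ is continuous while $A^{k}$ is a pure‑jump martingale to write $[M^{k,X},A^{k}]=[\delta^{k}X,A^{k}]=\sum_{n}\Delta(\delta^{k}X)_{T^{k}_{n}}\,(B_{T^{k}_{n}}-B_{T^{k}_{n-1}})\,1\!\!1_{\{T^{k}_{n}\le\cdot\}}$; by the identities of the first paragraph and $B_{T^{k}_{n}}-B_{T^{k}_{n-1}}=\sigma^{k}_{n}2^{-k}$, each summand equals $2^{-2k}\,\mathbb{E}[H_{T^{k}_{n-1}}\mid\mathcal{G}^{k}_{n-1}]$ up to a term whose partial sums are negligible by the estimate of the second paragraph (and a Cauchy--Schwarz argument controlling the number of summands). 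Therefore $[M^{k,X},A^{k}]_{\cdot}=\int_{0}^{\cdot}\check H^{k}_{s-}\,d[A^{k},A^{k}]_{s}+o(1)$ with $\check H^{k}$ piecewise constant and $\check H^{k}\to H$; since $[A^{k},A^{k}]_{t}=2^{-2k}C^{k}_{t}\to t$ (already exploited for $A^{k}$ in \cite{LEAO_OHASHI2013}), the right‑hand side converges to $\int_{0}^{\cdot}H_{s}\,ds=[X,B]$, and as $\langle X,B\rangle^{\delta}$ is by construction in \cite{LEAO_OHASHI2013} the limit of the $\mathbb{F}^{k}$‑angle brackets $\langle M^{k,X},A^{k}\rangle$, which share this limit, $\langle X,B\rangle^{\delta}=[X,B]$. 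The crux is the second paragraph: one must show that discretizing a generic Brownian martingale creates a drift $N^{k,X}$ which, although its variation density blows up, stays uniformly small and dies in $\textbf{B}^{2}$. Once this is secured, everything else reduces to the pure‑jump martingale theory for $A^{k}=\delta^{k}B$ that the present note leaves intact.
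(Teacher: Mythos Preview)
Your approach differs substantially from the paper's. The paper never tries to show $N^{k,X}\to 0$ directly; instead it introduces $W^k:=X^k-M^{k,X}$ with $X^k_\cdot=\mathbb{E}[X_T\mid\mathcal{F}^k_\cdot]$, observes via Lemma~\ref{carc_W} that $\Delta W^k_{T^k_n}=N^{k,X^k}_{T^k_n}-N^{k,X^k}_{T^k_{n-1}}$ is $\mathcal{G}^k_{n-}$-measurable (being an increment of the \emph{predictable} compensator of $X^k$), and hence that $[W^k,A^k]$ is an $\mathbb{F}^k$-martingale (Lemma~\ref{martlemma}) with quadratic variation $2^{-2k}[W^k,W^k]_T\to 0$. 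Together with the $\textbf{B}^2$-compactness of Lemma~\ref{energylemma}, the equivalence in Lemma~\ref{carc_W}, and $X^k\to X$ in $\textbf{B}^1$, this yields $M^{k,X}\to X$ \emph{weakly} in $\textbf{B}^2$; the covariation identity then drops out of Lemma~\ref{lfund2}. No representation $X=\int H\,dB$ is ever invoked.

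Your direct route through the integrand $H$ has a genuine gap at the step you call ``routine''. The reduction of $\|\hat N^k\|_{\textbf{B}^2}$ to $\big(\mathbb{E}\int_0^{T^k_{C^k_T+1}}(H_s-\bar H^k_s)^2\,ds\big)^{1/2}$ is essentially correct, but the density argument does not close: the discretization $H\mapsto\bar H^k$ (evaluation at the random left endpoint) is \emph{not} a bounded operator on $L^2(d\mathbb{P}\times ds)$, so from $\|H-H'\|_{L^2}<\epsilon$ you cannot control $\|\bar H^k-\bar{H'}{}^k\|_{L^2}$ and the triangle inequality leaves an uncontrolled third term. (The usual construction of the It\^o integral works by isometry at the level of $\int H\,dB$, not by proving $\bar H\to H$ in $L^2$ for arbitrary $H$; already for deterministic partitions the latter fails for general $L^2$ integrands.) A second loose end is the appeal to $\delta^kX\to X$ strongly in $\textbf{B}^2$: \cite{LEAO_OHASHI2013} gives this for bounded $X$ and gives $X^k\to X$ in $\textbf{B}^1$ via weak convergence of filtrations, but the strong $\textbf{B}^2$ statement for $\delta^kX$ with general $X\in\textbf{H}^2$ is not available off the shelf. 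Both difficulties are precisely what the paper's indirect $[W^k,A^k]$ argument sidesteps by working only with $\mathbb{F}^k$-martingales, uniform bounds on $[W^k,W^k]_T$, and weak compactness.
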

Before proving the above lemma, we need some auxiliary results. At first, we observe that Prop 3.1 in \cite{LEAO_OHASHI2013} holds for any sequence $\{Y^k; k\ge 1\}$ of the form (\ref{smdec}).

\begin{lemma}\label{energylemma}
Let $\{Y^k; k\ge 1\}$ be a sequence of square-integrable martingales $Y^k\in \textbf{H}^2(\mathbb{F}^k); k\ge 1$. If  $\sup_{k\ge 1}\mathbb{E}[Y^k,Y^k]_T < \infty$, then $\{Y^k; k\ge 1\}$ is $\textbf{B}^2$-weakly relatively sequentially compact where all limit points are $\mathbb{F}$-square-integrable martingales over $[0,T]$.
\end{lemma}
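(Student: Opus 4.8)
The plan is to follow the proof of Proposition 3.1 in \cite{LEAO_OHASHI2013}, observing that it uses only that each $Y^k$ is a square-integrable $\mathbb{F}^k$-martingale of the form (\ref{smdec}) with uniformly bounded energy, and not the special structure $Y^k=\delta^kW$. First I would establish the a priori bound. Since $Y^k\in\textbf{H}^2(\mathbb{F}^k)$ has $Y^k_0=0$, the process $|Y^k|^2-[Y^k,Y^k]$ is an $\mathbb{F}^k$-martingale vanishing at $0$, so $\mathbb{E}|Y^k_T|^2=\mathbb{E}[Y^k,Y^k]_T$; combining this with Doob's $L^2$-maximal inequality yields
\begin{equation*}
\sup_{k\ge1}\mathbb{E}\sup_{0\le t\le T}|Y^k_t|^2\ \le\ 4\sup_{k\ge1}\mathbb{E}|Y^k_T|^2\ =\ 4\sup_{k\ge1}\mathbb{E}[Y^k,Y^k]_T\ <\ \infty .
\end{equation*}
Hence $\{Y^k;k\ge1\}$ is norm-bounded in $\textbf{B}^2$ over $[0,T]$ and, in particular, $\{Y^k_T;k\ge1\}$ is bounded in the Hilbert space $L^2(\Omega,\mathcal{F}_T,\mathbb{P})$.

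Next I would produce the candidate limit. By weak sequential compactness of bounded sets in $L^2(\Omega,\mathcal{F}_T,\mathbb{P})$, along a subsequence $Y^k_T\to\zeta$ weakly in $L^2$ for some $\zeta\in L^2(\Omega,\mathcal{F}_T,\mathbb{P})$. Put $Y_t:=\mathbb{E}[\zeta\,|\,\mathcal{F}_t]$ and pass to its c\`adl\`ag version. From $\mathbb{E}[Y^k_T]=Y^k_0=0$ one gets $\mathbb{E}[\zeta]=0$, whence $Y_0=0$; weak lower semicontinuity of the $L^2$-norm gives $\mathbb{E}|\zeta|^2\le\liminf_k\mathbb{E}|Y^k_T|^2<\infty$; and Doob's inequality again gives $\mathbb{E}\sup_{0\le t\le T}|Y_t|^2\le4\mathbb{E}|\zeta|^2<\infty$. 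Thus $Y\in\textbf{H}^2(\mathbb{F})$.

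The core step is to identify this weak limit. Here I would use that the $\sigma$-algebras $\mathcal{F}^k_t$ increase with $k$ and $\bigvee_{k\ge1}\mathcal{F}^k_t=\mathcal{F}_t$ modulo null sets, a standard property of the skeleton filtrations (see \cite{LEAO_OHASHI2013}). Fix $t\in[0,T]$ and bounded $h\in L^2(\Omega,\mathcal{F}_t,\mathbb{P})$, and write $h_{k_0}:=\mathbb{E}[h\,|\,\mathcal{F}^{k_0}_t]$, so that $h_{k_0}\to h$ in $L^2$. For $k\ge k_0$, the $\mathbb{F}^k$-martingale property of $Y^k$ together with $\mathcal{F}^{k_0}_t\subseteq\mathcal{F}^k_t$ gives $\mathbb{E}[h_{k_0}Y^k_t]=\mathbb{E}[h_{k_0}Y^k_T]\to\mathbb{E}[h_{k_0}\zeta]=\mathbb{E}[h_{k_0}Y_t]$, while $|\mathbb{E}[(h-h_{k_0})Y^k_t]|\le\|h-h_{k_0}\|_{L^2}\,\sup_k\|Y^k_t\|_{L^2}$, the supremum being finite by Step 1; a three-$\varepsilon$ argument then yields $\mathbb{E}[hY^k_t]\to\mathbb{E}[hY_t]$. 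So $Y^k_t\to Y_t$ weakly in $L^2(\Omega)$ for every $t\in[0,T]$; combined with the uniform $\textbf{B}^2$-bound of Step 1 and dominated convergence in the time variable, this yields $\textbf{B}^2$-weak convergence $Y^k\to Y$ over $[0,T]$, with $Y\in\textbf{H}^2(\mathbb{F})$ by Step 2. Applying the argument to an arbitrary subsequence (first extracting a weak $L^2$-limit $\zeta$ of its terminal values) shows that every subsequence has a further subsequence converging $\textbf{B}^2$-weakly to a limit of this form, which is the asserted relative sequential compactness.

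I expect the main obstacle to be precisely this last identification: the $Y^k$ are martingales for the moving filtrations $\mathbb{F}^k$, not for $\mathbb{F}$, so the martingale identity can only be tested directly against functions measurable for some $\mathcal{F}^{k_0}_t$, and the passage to general $\mathcal{F}_t$-measurable test functions must be driven by the uniform $L^2$-control of the slices $Y^k_t$ coming from the energy hypothesis. Reconciling this slice-wise weak convergence with the exact meaning of $\textbf{B}^2$-weak convergence adopted in \cite{LEAO_OHASHI2013} is the only other point requiring care; everything else is routine.
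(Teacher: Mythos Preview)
Your proposal is correct and is essentially the paper's own approach: the paper's proof consists solely of introducing the $\mathbb{F}$-optional projections $Z^k_t:=\mathbb{E}[Y^k_T\,|\,\mathcal{F}_t]$ and then declaring that the argument of Proposition~3.1 in \cite{LEAO_OHASHI2013} applies verbatim to both $\{Z^k\}$ and $\{Y^k\}$. Your write-up simply unpacks that argument---Doob's inequality for the uniform $\textbf{B}^2$-bound, weak $L^2$-compactness of the terminal values, and identification of the limit via the monotone convergence $\mathcal{F}^{k_0}_t\subseteq\mathcal{F}^k_t\uparrow\mathcal{F}_t$---without naming the intermediate sequence $Z^k$; the content is the same.
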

\begin{proof} By denoting $Z^{k}_t: = \mathbb{E}[Y^{k}_T|\mathcal{F}_t];~0\le t\le T$, we can apply exactly the same arguments given in the proof of Proposition 3.1 in \cite{LEAO_OHASHI2013} to show that both $\{Z^k;k\ge 1\}$ and $\{Y^k;k\ge 1\}$ are $\textbf{B}^2$-weakly relatively compact and all limit points are $\mathbb{F}$-square-integrable martingales over $[0,T]$.

\end{proof}
Lemma 3.4 in \cite{LEAO_OHASHI2013} holds if $\delta^kW$ is a pure jump martingale. Then, we have the following result.
\begin{lemma}\label{lfund1}
Let $H_\cdot = \mathbb{E}[ 1\!\!1_{G}|\mathcal{F}_\cdot]$ and $H^k_\cdot =\mathbb{E}[ 1\!\!1_{ G}|\mathcal{F}^k_\cdot]$ be positive and uniformly integrable martingales w.r.t filtrations $\mathbb{F}$ and $\mathbb{F}^k$, respectively, where $G\in \mathcal{F}_T$. Then,

$$\Bigg\|\int_0^\cdot H_sdB_s -  \oint_0^\cdot H^k_sdA^k_s\Bigg\|_{\textbf{B}^2}\rightarrow 0\quad \text{as}~k\rightarrow \infty$$
over $[0,T]$.
\end{lemma}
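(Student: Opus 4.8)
The plan is to reduce the general claim to the already-available tools: the energy/compactness result (Lemma \ref{energylemma}), the pure-jump martingale property of $A^k=\delta^kB$ (Corollary \ref{AKMAR}), and the portion of Lemma 3.4 in \cite{LEAO_OHASHI2013} that survives — namely the statement for pure jump martingales, which now applies precisely because $A^k$ is a pure jump $\mathbb{F}^k$-martingale. First I would record that $G\in\mathcal{F}_T$ and that $H,H^k$ are the bounded (hence $\textbf{B}^2$) martingale versions of $\mathbb{E}[1\!\!1_G|\cdot]$; in particular $\int_0^\cdot H_sdB_s$ is a genuine element of $\textbf{H}^2$ over $[0,T]$ and $\oint_0^\cdot H^k_sdA^k_s$ is a square-integrable $\mathbb{F}^k$-martingale (the stochastic integral against $A^k$ is the pure-jump integral $\sum_{n}H^k_{T^k_n-}\Delta A^k_{T^k_n}1\!\!1_{\{T^k_n\le\cdot\}}$). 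The objective is to show the difference goes to $0$ in $\textbf{B}^2$-norm over $[0,T]$.

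The key steps, in order, are: (i) verify the uniform energy bound $\sup_{k\ge1}\mathbb{E}\big[\oint_0^\cdot H^k_sdA^k_s,\ \oint_0^\cdot H^k_sdA^k_s\big]_T<\infty$, using $|H^k|\le 1$ together with $\mathbb{E}[A^k,A^k]_T=\mathbb{E}[B,B]_T=T$ (the quadratic variation of $A^k=\delta^kB$ is controlled uniformly, as established in \cite{LEAO_OHASHI2013}); by Lemma \ref{energylemma} this gives $\textbf{B}^2$-weak relative compactness of $\{\oint_0^\cdot H^k_sdA^k_s\}_k$ with all limit points $\mathbb{F}$-square-integrable martingales over $[0,T]$. (ii) Identify every weak limit point: along a convergent subsequence, use that $H^k_\cdot\to H_\cdot$ appropriately (this is the content of the convergence of conditional expectations $\mathbb{E}[1\!\!1_G|\mathcal{F}^k_\cdot]\to\mathbb{E}[1\!\!1_G|\mathcal{F}_\cdot]$ already used in \cite{LEAO_OHASHI2013}) and that $A^k\to B$ in the relevant sense, to conclude the limit must be $\int_0^\cdot H_sdB_s$; since the limit is unique, the whole sequence converges weakly. (iii) Upgrade weak convergence to norm convergence: compute $\mathbb{E}|\oint_0^T H^k_sdA^k_s|^2=\mathbb{E}\oint_0^T(H^k_{s-})^2d[A^k,A^k]_s\to\mathbb{E}\int_0^T H_s^2 ds=\mathbb{E}|\int_0^T H_sdB_s|^2$, so the $\textbf{B}^2$-norms (equivalently, the terminal $L^2$-norms, by Doob) converge to that of the limit; combined with weak convergence in the Hilbert-space sense this forces strong convergence of the terminal values, and then Doob's inequality propagates it to the $\textbf{B}^2$-norm of the whole path difference over $[0,T]$.

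The main obstacle I expect is step (iii), specifically the convergence of the energies $\mathbb{E}\oint_0^T(H^k_{s-})^2d[A^k,A^k]_s\to\mathbb{E}\int_0^T H_s^2\,ds$: one must show not just a uniform bound but actual convergence, which requires controlling the interplay between the jump integrand $H^k_{s-}$ (evaluated along the skeleton times $T^k_n$) and the discrete quadratic-variation measure $d[A^k,A^k]$, and comparing it with the continuous object $H_s^2ds$. This is where the pure-jump structure of $A^k$ from Corollary \ref{AKMAR} is essential — it is what makes the stochastic integral against $A^k$ well behaved and lets us invoke the pure-jump case of Lemma 3.4 in \cite{LEAO_OHASHI2013} — and where one pays for the gap identified in the corrigendum. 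A secondary technical point is the uniform integrability needed to pass from weak convergence plus norm convergence to strong convergence; this is handled by the square-integrability built into $\textbf{B}^2$ and the uniform energy bound from step (i).
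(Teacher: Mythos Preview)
Your outline is workable in principle, but it is far more elaborate than necessary and leaves precisely the step you flag --- the energy convergence in (iii) --- unresolved. The paper's proof is essentially one line: Lemma~3.4 of \cite{LEAO_OHASHI2013} \emph{already} asserts the $\textbf{B}^2$-norm convergence
\[
\Bigg\|\int_0^\cdot H_s\,dW_s - \oint_0^\cdot H^k_s\,d(\delta^kW)_s\Bigg\|_{\textbf{B}^2}\to 0,
\]
and the only defect in its original proof is that it implicitly assumed $\delta^kW$ to be a pure jump $\mathbb{F}^k$-martingale. Taking $W=B$, Corollary~\ref{AKMAR} supplies exactly this (since $\delta^kB=A^k$ is a pure jump martingale), and the moment condition $\mathbb{E}\sup_{0\le t\le T}|B_t|^p<\infty$ for $p>2$ needed in that lemma is immediate. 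So Lemma~3.4 applies verbatim and the conclusion follows.

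In other words, you correctly identify Lemma~3.4 as the relevant tool, but you then treat it as only a partial ingredient inside a separate weak-compactness/identification/norm-upgrade scheme, rather than as the full result. Steps (i)--(iii) of your plan essentially reprove Lemma~3.4 from scratch, and the obstacle you single out in (iii) --- convergence of $\mathbb{E}\!\oint_0^T (H^k_{s-})^2\,d[A^k,A^k]_s$ --- is exactly the kind of technical work that Lemma~3.4 already packages. There is no need to redo it; once Corollary~\ref{AKMAR} is in hand, the lemma is a direct citation.
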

\begin{proof}
Since $A^k$ is a pure jump martingale and $\mathbb{E}\sup_{0\le t\le T}|B_t|^p< \infty$ for every $p> 2$, then we shall apply Lemma 3.4 in \cite{LEAO_OHASHI2013} to conclude the proof.
\end{proof}
We observe that Lemma 3.5 in \cite{LEAO_OHASHI2013} holds for $A^k$ and a generic $Y^k$of the form (\ref{smdec}) as follows.
\begin{lemma}\label{lfund2}
Let $\{Y^k;k\ge 1\}$ be a sequence satisfying the assumption in Lemma \ref{energylemma}. Let $\{Y^{k_i}; i\ge 1\}$ be a $\textbf{B}^2$-weakly convergent subsequence such that $\lim_{i\rightarrow \infty}Y^{k_i}=Z$, where $Z\in \textbf{H}^2$. Then,

\begin{equation}\label{conq}
\lim_{k\rightarrow \infty}[Y^{k_i},A^{k_i}]_t = [Z,B]_t\quad\text{weakly in}~L^1(\mathbb{P})
\end{equation}
for every $t\in [0,T]$.
\end{lemma}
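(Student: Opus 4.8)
The plan is to deduce \eqref{conq} from two facts: the family $\{[Y^{k_i},A^{k_i}]_t\}_{i\ge1}$ is uniformly integrable in $L^1(\mathbb{P})$, and every weak-$L^1$ limit point of it equals $[Z,B]_t$; by the Dunford--Pettis theorem and a standard subsequence argument, these imply the weak-$L^1$ convergence \eqref{conq}. Throughout I use that $Y^k$ and $A^k$ are purely discontinuous square-integrable $\mathbb{F}^k$-martingales whose jumps are exhausted by $\cup_{n\ge1}[[T^k_n,T^k_n]]$, so that $[Y^k,A^k]_t=\sum_{n\ge1}\Delta Y^k_{T^k_n}\Delta A^k_{T^k_n}1\!\!1_{\{T^k_n\le t\}}$ with $|\Delta A^k_{T^k_n}|=2^{-k}$, together with the Kunita--Watanabe inequality $|[Y^k,A^k]_t|\le[Y^k,Y^k]_t^{1/2}[A^k,A^k]_t^{1/2}$.

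For uniform integrability I would combine: (i) the standing assumption $\sup_k\mathbb{E}[Y^k,Y^k]_T<\infty$, which makes $\{[Y^k,Y^k]_t^{1/2}\}_k$ bounded in $L^2$, hence uniformly integrable; (ii) the fact (Corollary~\ref{AKMAR}) that $A^k=\mathbb{E}[B_T|\mathcal{F}^k_\cdot]$ is a martingale, so that Jensen's inequality, Doob's $L^4$-inequality and the Burkholder--Davis--Gundy inequality give $\sup_k\mathbb{E}[A^k,A^k]_T^2\le c\,\mathbb{E}|B_T|^4<\infty$, whence $\{[A^k,A^k]_T\}_k$ is uniformly integrable; and (iii) splitting, for $\lambda,c>0$, the expectation $\mathbb{E}[\,|[Y^k,A^k]_t|\,1\!\!1_{\{|[Y^k,A^k]_t|>\lambda\}}]$ over $\{[A^k,A^k]_T\le c\}$ and its complement and applying Kunita--Watanabe and Cauchy--Schwarz to each piece. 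Choosing first $c$ large (to control the complement via (ii)), then $\lambda$ large (to control the remaining piece via (i)), yields $\lim_{\lambda\to\infty}\sup_k\mathbb{E}[\,|[Y^k,A^k]_t|\,1\!\!1_{\{|[Y^k,A^k]_t|>\lambda\}}]=0$, hence the uniform integrability of $\{[Y^{k_i},A^{k_i}]_t\}_i$.

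For the identification of the limit, fix $G\in\mathcal{F}_T$ and put $H_\cdot=\mathbb{E}[1\!\!1_G|\mathcal{F}_\cdot]$, $H^k_\cdot=\mathbb{E}[1\!\!1_G|\mathcal{F}^k_\cdot]$, both bounded by $1$. Since $\Delta Y^k_{T^k_n}\Delta A^k_{T^k_n}1\!\!1_{\{T^k_n\le t\}}$ is $\mathcal{F}^k_{T^k_n}$-measurable, conditioning gives $\mathbb{E}[1\!\!1_G[Y^k,A^k]_t]=\mathbb{E}\sum_{n\ge1}H^k_{T^k_n}\Delta Y^k_{T^k_n}\Delta A^k_{T^k_n}1\!\!1_{\{T^k_n\le t\}}$. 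Replacing $H^k_{T^k_n}$ by the value of the integrand used in $\oint_0^\cdot H^k\,dA^k$ (either $H^k_{T^k_{n-1}}$ or $H^k_{T^k_n-}$, either choice working), I would split the right-hand side as
$$\mathbb{E}\Big[\Big(\oint_0^t H^k\,dA^k\Big)Y^k_t\Big]+\mathcal{R}^k_G,$$
where the first term arises because $\oint_0^\cdot H^k\,dA^k$ is a square-integrable $\mathbb{F}^k$-martingale null at $0$ (its martingale property following from $\mathbb{E}[\Delta A^k_{T^k_n}|\mathcal{F}^k_{T^k_n-}]=0$, as in Corollary~\ref{AKMAR}) whose jumps paired with those of $Y^k$ reproduce the preceding sum, so that the cross term equals $\mathbb{E}[\oint H^k\,dA^k,Y^k]_t=\mathbb{E}[(\oint_0^t H^k\,dA^k)Y^k_t]$; and the remainder $\mathcal{R}^k_G$ is, by Cauchy--Schwarz, $|\Delta A^k_{T^k_n}|=2^{-k}$, the first-moment energy bound on $Y^k$, and either the bound $\mathbb{E}\sum_n(H^k_{T^k_n}-H^k_{T^k_{n-1}})^2 1\!\!1_{\{T^k_n\le T\}}\le\mathbb{E}(H^k_T)^2\le1$ (obtained by optional sampling of the bounded martingale $H^k$) or $\mathbb{E}[H^k,H^k]_T\le1$, bounded in absolute value by a constant times $2^{-k}$, uniformly in $G$. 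Now Lemma~\ref{lfund1} gives $\oint_0^\cdot H^{k_i}\,dA^{k_i}\to\int_0^\cdot H_s\,dB_s$ strongly in $\textbf{B}^2$, hence $\oint_0^t H^{k_i}\,dA^{k_i}\to\int_0^t H_s\,dB_s$ strongly in $L^2$; and since evaluation at $t$ is a bounded linear map $\textbf{B}^2\to L^2$, the hypothesis $Y^{k_i}\to Z$ weakly in $\textbf{B}^2$ gives $Y^{k_i}_t\to Z_t$ weakly in $L^2$. The product of a norm-convergent and a weakly convergent sequence in $L^2$ then yields $\mathbb{E}[1\!\!1_G[Y^{k_i},A^{k_i}]_t]\to\mathbb{E}[Z_t\int_0^t H_s\,dB_s]$. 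Finally, writing $Z_t=\int_0^t\varphi_s\,dB_s$ by martingale representation (so $[Z,B]_t=\langle Z,B\rangle_t=\int_0^t\varphi_s\,ds$), Itô's isometry and Fubini give $\mathbb{E}[Z_t\int_0^t H_s\,dB_s]=\mathbb{E}\int_0^t H_s\varphi_s\,ds=\mathbb{E}[1\!\!1_G\int_0^t\varphi_s\,ds]=\mathbb{E}[1\!\!1_G[Z,B]_t]$. Since $G\in\mathcal{F}_T$ is arbitrary and $[Z,B]_t$ is $\mathcal{F}_t$-measurable, the uniformly integrable family $\{[Y^{k_i},A^{k_i}]_t\}_i$ has $[Z,B]_t$ as its unique weak-$L^1$ limit point, which is \eqref{conq}. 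This argument parallels the proof of Lemma~3.5 in \cite{LEAO_OHASHI2013}, with a generic $Y^k$ of the form \eqref{smdec} in place of $\delta^kW$.

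I expect the most delicate point to be the uniform-integrability step: since only the first-moment energy bound $\sup_k\mathbb{E}[Y^k,Y^k]_T<\infty$ is available (and not a bound on $\mathbb{E}[Y^k,Y^k]_T^2$), one cannot obtain $L^2$-boundedness of $[Y^{k_i},A^{k_i}]_t$ directly, so the two-scale truncation together with the $L^4$-bound on $A^k$ is genuinely needed. By contrast, once Lemma~\ref{lfund1} and the martingale representation are in place, the identification of the limit is essentially bookkeeping, and the vanishing of the remainder $\mathcal{R}^k_G$ is immediate from the factor $2^{-k}$.
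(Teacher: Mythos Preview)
Your proof is correct and follows essentially the same approach as the paper: the paper's proof simply instructs one to rerun the argument of Lemma~3.5 in \cite{LEAO_OHASHI2013} with $W=B$ and with the generic martingale $Y^k$ (of the form~\eqref{smdec}) in place of $M^{k,X}$, invoking Lemmas~\ref{energylemma} and~\ref{lfund1} at the appropriate places, which is exactly the argument you have written out in detail. Your explicit uniform-integrability step (via the two-scale truncation using the $L^4$-bound on $A^k$) and your identification of the limit (pairing the $\textbf{B}^2$-strong convergence of $\oint H^k\,dA^k$ from Lemma~\ref{lfund1} with the weak convergence $Y^{k_i}_t\to Z_t$ in $L^2$, then closing with Brownian martingale representation) are a faithful reconstruction of that strategy.
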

\begin{proof}
In the notation of the proof of Lemma 3.5 in~\cite{LEAO_OHASHI2013}, we observe that since for every BMO $\mathbb{F}$-martingale $U$, we have $\lim_{k\rightarrow \infty}[Z^{k,X}, U]_t=[Z,U]_t$ weakly in $L^1(\mathbb{P})$ for every $t\in [0,T]$, then we shall take $W=B$. We replace the martingale component $M^{k,X}$ defined by (2.10) in \cite{LEAO_OHASHI2013} by $Y^k$ in (\ref{smdec}). Then, by observing $\Delta Y^k = \Delta Y^{k,pj}$ and applying Lemmas \ref{energylemma} and \ref{lfund1}, the proof of Lemma 3.5 in \cite{LEAO_OHASHI2013} works perfectly for the pure-jump sequence $\{Y^{k,pj}; k\ge 1\}$ associated to the martingale components $\{Y^k; k\ge 1\}$.
\end{proof}
In the sequel, we fix $X\in \textbf{H}^2$ and write $X^k_t:=\mathbb{E}[X_T|\mathcal{F}^k_t]; t\ge 0$. Let, $X^k_t = X^{k,pj}_t - N^{k,X^k}_t;t\ge 0,$ be the $\mathbb{F}^k$-special semimartingale decomposition given in (\ref{smdec}). Let $\delta ^kX = M^{k,X} +N^{k,X}$ be the special semimartingale decomposition given by (2.10) in \cite{LEAO_OHASHI2013}. Since $X\in \textbf{H}^2$ and $\mathbb{F}^k\subset \mathbb{F}$ for every $k\ge 1$, then

$$\mathbb{E}[X_T|\mathcal{F}^k_t] = \mathbb{E}\big[ \mathbb{E}[X_\infty|\mathcal{F}_T]|\mathcal{F}^k_t\big]=\mathbb{E}[X_\infty|\mathcal{F}^k_t]; 0\le t\le T$$ so that $\mathbb{E}[X_T|\mathcal{G}^k_n] = \mathbb{E}[X_\infty|\mathcal{G}^k_n] = \mathbb{E}\big[ \mathbb{E}[X_\infty|\mathcal{F}_{T^k_n}]|\mathcal{G}^k_{n} \big] = \mathbb{E}[X_{T^k_n}|\mathcal{G}^k_n]$ on $\{T^k_n \le T\}$. In other words,

\begin{equation}\label{ident1}
X^k_{T^k_n} = \delta^kX_{T^k_n}~\text{on}~\{T^k_n\le T\}; k\ge 1.
\end{equation}
Let us denote $W^k:= X^k - M^{k,X}; k\ge 1$. Since $W^k$ is a purely discontinuous martingale, then it has a decomposition of the form (\ref{smdec}).

\begin{lemma} \label{carc_W}
The sequence $\{W^k ; k \geq 1 \}$ satisfies $\sup_{k\ge 1}\mathbb{E}[W^k,W^k]_T <\infty$ and

$$\Delta W^k_{T^k_n}  1\!\!1_{ \{T^k_n \leq t \} }=  \left(N^{k,X^k}_{T^k_n} - N^{k,X^k}_{T^k_{n-1}}\right) 1\!\!1_{ \{T^k_n \leq t \} }; n\ge 1.$$
Therefore, $\lim_{k\rightarrow \infty}W^k = 0$ weakly in $\textbf{B}^2$ over $[0,T]$ if, and only if,

\begin{equation}\label{wkak}
[W^k,A^k]_t=\sum_{n=1}^\infty \left(N^{k,X^k}_{T^k_n} - N^{k,X^k}_{T^k_{n-1}}\right) \Delta A^k_{T^k_n} 1\!\!1_{\{T^k_n \leq t  \} } \rightarrow 0
\end{equation}
weakly in $L^1(\mathbb{P})$ as $k\rightarrow \infty$, for every $t\in [0,T]$.
\end{lemma}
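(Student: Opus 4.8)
The plan is to exploit the two special-semimartingale decompositions at play. Since $X^k$ and $M^{k,X}$ are both purely discontinuous $\mathbb{F}^k$-martingales, $W^k = X^k - M^{k,X}$ is again such a martingale, hence of the form \eqref{smdec}, with jumps exhausted by $\cup_{n\ge 1}[[T^k_n,T^k_n]]$. First I would compute $\Delta W^k_{T^k_n}$ on $\{T^k_n\le T\}$ using the two decompositions: $\delta^k X = M^{k,X}+N^{k,X}$ and $X^k = X^{k,pj} - N^{k,X^k}$. The identity \eqref{ident1}, namely $X^k_{T^k_n}=\delta^kX_{T^k_n}$ on $\{T^k_n\le T\}$, together with $\Delta \delta^kX_{T^k_n}= \delta^kX_{T^k_n}-\delta^kX_{T^k_{n-1}} = X^k_{T^k_n}-X^k_{T^k_{n-1}}$ on $\{T^k_n\le T\}$ (recall $\delta^kX$ is constant on the stochastic intervals $[[T^k_{n-1},T^k_n[[$, and its value at $T^k_{n-1}$ equals $\mathbb{E}[X_{T^k_{n-1}}|\mathcal G^k_{n-1}] = X^k_{T^k_{n-1}}$), shows $\Delta W^k_{T^k_n} = \Delta X^k_{T^k_n} - \Delta \delta^kX_{T^k_n} = -\,\Delta N^{k,X}_{T^k_n}$... but more usefully, writing $M^{k,X} = \delta^kX - N^{k,X}$ and $\delta^kX_{T^k_n}-\delta^kX_{T^k_{n-1}} = X^{k,pj}_{T^k_n}-X^{k,pj}_{T^k_{n-1}} - (N^{k,X^k}_{T^k_n}-N^{k,X^k}_{T^k_{n-1}})$, and using that the continuous predictable parts $N^{k,X}$ and $N^{k,X^k}$ cannot contribute to the jump at the totally inaccessible time $T^k_n$, I get that the jump of $W^k$ at $T^k_n$ equals $N^{k,X^k}_{T^k_n}-N^{k,X^k}_{T^k_{n-1}}$. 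This is a short computation of the claimed jump identity once the bookkeeping of which terms are continuous is done carefully.

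Next I would prove the energy bound $\sup_{k\ge1}\mathbb{E}[W^k,W^k]_T<\infty$. Since $X^k = \mathbb{E}[X_T|\mathcal F^k_\cdot]$ is the $\mathbb{F}^k$-optional projection of the bounded-in-$\textbf{H}^2$ martingale $X$, standard conditional-expectation contraction gives $\sup_k\mathbb{E}[X^k,X^k]_T = \sup_k\mathbb{E}|X^k_T|^2 \le \mathbb{E}|X_T|^2<\infty$. For $M^{k,X}$, I would invoke the bound on $\mathbb{E}[M^{k,X},M^{k,X}]_T$ already available from the construction in \cite{LEAO_OHASHI2013} (it is the martingale part of $\delta^kX$, whose quadratic variation is controlled because $\delta^kX_{T^k_n}=\mathbb{E}[X_{T^k_n}|\mathcal G^k_n]$ and the jumps telescope nicely); then $[W^k,W^k]_T \le 2[X^k,X^k]_T + 2[M^{k,X},M^{k,X}]_T$ by the parallelogram-type inequality for quadratic variations, giving the uniform bound.

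Finally, for the equivalence, I would apply Lemma \ref{energylemma} to $\{W^k\}$: it is $\textbf{B}^2$-weakly relatively sequentially compact with all limit points being $\mathbb{F}$-square-integrable martingales over $[0,T]$. So $W^k\to 0$ weakly in $\textbf{B}^2$ iff every weakly convergent subsequence has limit $0$. By Lemma \ref{lfund2} applied with $Y^{k_i}=W^{k_i}$ and $Z$ the (sub)sequential weak limit, $[W^{k_i},A^{k_i}]_t\to [Z,B]_t$ weakly in $L^1(\mathbb{P})$ for each $t\in[0,T]$; and the explicit form of $[W^k,A^k]_t$ in \eqref{wkak} follows from the jump identity just established together with $[W^k,A^k]_t=\sum_{0<s\le t}\Delta W^k_s\Delta A^k_s$ (both are purely discontinuous, so their covariation is the sum of the products of jumps, and $\Delta A^k$ is supported on the same $T^k_n$). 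If \eqref{wkak} holds then $[Z,B]\equiv 0$; since $Z\in\textbf{H}^2$ is a Brownian martingale, $[Z,B]=0$ forces $Z=0$ (by the martingale representation theorem $Z=\int_0^\cdot\phi_s\,dB_s$, and $[Z,B]_t=\int_0^t\phi_s\,ds=0$ a.s.\ for all $t$ gives $\phi\equiv0$), so the unique limit point is $0$ and $W^k\to0$ weakly in $\textbf{B}^2$. Conversely, if $W^k\to0$ then $Z=0$ and $[W^{k},A^{k}]_t\to 0$ weakly in $L^1(\mathbb{P})$. The main obstacle I anticipate is the first step: pinning down $\Delta W^k_{T^k_n}$ requires being scrupulous about the fact that $X^k$ and $\delta^kX$ share the same value at each $T^k_n$ on $\{T^k_n\le T\}$ but not necessarily elsewhere, and that the predictable finite-variation components $N^{k,X}$, $N^{k,X^k}$ are continuous (hence jump-free at the totally inaccessible $T^k_n$), so that all the jump at $T^k_n$ is accounted for by the $N^{k,X^k}$-increment — the weak-convergence part is then a routine assembly of Lemmas \ref{energylemma} and \ref{lfund2} plus the representation theorem.
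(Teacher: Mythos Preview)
Your proposal is correct and follows essentially the same route as the paper: the jump identity is obtained from $\Delta W^k_{T^k_n}=\Delta X^k_{T^k_n}-\Delta M^{k,X}_{T^k_n}$ together with identity~\eqref{ident1} and the continuity of $N^{k,X^k}$, the energy bound comes from combining $\sup_k\mathbb{E}[X^k,X^k]_T<\infty$ with $\sup_k\mathbb{E}[\delta^kX,\delta^kX]_T<\infty$ (Lemma~3.1 in \cite{LEAO_OHASHI2013}, which is the reference you allude to for $M^{k,X}$ since $[M^{k,X},M^{k,X}]=[\delta^kX,\delta^kX]$), and the equivalence is exactly the assembly of Lemmas~\ref{energylemma} and~\ref{lfund2} with the Brownian martingale representation. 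The aside ``$=-\Delta N^{k,X}_{T^k_n}$'' in your first paragraph is a slip (that quantity is zero), but you immediately abandon it and carry out the correct computation, so no harm is done.
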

\begin{proof}
By applying Lemma 3.1 in \cite{LEAO_OHASHI2013} and the fact that $X\in \textbf{H}^2$, we have the bound $\sup_{k\ge 1}\mathbb{E}[\delta^kX,\delta^kX]_T < \infty$. Burkholder-Davis-Gundy inequality also yields $\sup_{k\ge 1}\mathbb{E}[X^k,X^k|_T < \infty$ and hence, $\sup_{k\ge 1} \mathbb{E} [W^k , W^k]_T < \infty.$ For a given $t\in (0,T]$, we have
\begin{eqnarray}
\nonumber \Delta W^k_{T^k_n} 1\!\!1_{ \{T^k_n \leq t \} } &=& \left(\Delta X^k_{T^k_n} - \Delta M^{k,X}_{T^k_n}\right) 1\!\!1_{ \{T^k_n \leq t \} }\\
\nonumber &=& \left( X^k_{T^k_n} - X^k_{T^k_n-} - \delta^k X_{T^k_n} + \delta^k X_{T^k_{n-1}}\right) 1\!\!1_{ \{T^k_n \leq t \} } \\
&=&\label{por1} \left( X^k_{T^k_n} - X^k_{T^k_n-} - X^k _{T^k_n} +  X^k_{T^k_{n-1}}\right) 1\!\!1_{ \{T^k_n \leq t \} }\\
\nonumber&=& \left(-X^k_{T^k_n-} + X^k_{T^k_{n-1}}\right) 1\!\!1_{ \{T^k_n \leq t \} } \\
&=&\label{por2} \left(N^{k,X^k}_{T^k_n} - N^{k,X^k}_{T^k_{n-1}}\right) 1\!\!1_{ \{T^k_n \leq t \} }, \quad n \geq 1,
\end{eqnarray}
where in (\ref{por1}) and (\ref{por2}), we have used identity (\ref{ident1}) and the fact that $N^{k,X^k}$ has continuous paths, respectively.
The last statement is a simple application of Lemmas \ref{energylemma}, \ref{lfund2} and the predictable martingale representation of the Brownian motion.
\end{proof}
We are now able to prove Lemma \ref{mainlemma}.

\

\noindent \textbf{Proof of Lemma \ref{mainlemma}:} Lemma \ref{carc_W} and predictability of $N^{k,X^k}$ yield $\Delta W^k_{T^k_n}  1\!\!1_{\{T^k_n \le t\}}$ is $\mathcal{G}^k_{n-}$-measurable for each $n\ge 1$ and $t\ge 0$. Therefore, it follows from Corollaries \ref{cormart} and \ref{AKMAR} that
$$\mathbb{E}[\Delta W^k_{T^k_n}\Delta A^k_{T^k_n}|\mathcal{G}^k_{n-}]1\!\!1_{\{T^k_n \le t\}} = \Delta W^k_{T^k_n}\mathbb{E}[\Delta A^k_{T^k_n}|\mathcal{G}^k_{n-}]1\!\!1_{\{T^k_n \le t\}}=0~a.s$$
for each $n\ge 1$ and $t\ge 0$. By applying Lemma \ref{martlemma} on the pure jump process $[W^k,A^k]$ given by (\ref{wkak}), we can safely state that this process is an $\mathbb{F}^k$-martingale for every $k\ge 1$. Lemma \ref{carc_W} yields

\begin{eqnarray*}
\sup_{k\ge 1} \mathbb{E} [W^k , W^k]_T =\sup_{k\ge 1} \mathbb{E} \sum_{n=1}^\infty  \left(N^{k,X^k}_{T^k_n} - N^{k,X^k}_{T^k_{n-1}}\right)^2 1\!\!1_{\{T^k_n \leq T\}} < \infty,
\end{eqnarray*}
so that
\begin{eqnarray*}
\mathbb{E} \Big[[W^k , A^k] , [W^k , A^k]\Big]_T &=& \mathbb{E} \sum_{n=1}^\infty \left(N^{k,X^k}_{T^k_n} - N^{k,X^k}_{T^k_{n-1}}\right)^2 |\Delta A^k_{T^k_n}|^2 1\!\!1_{\{T^k_n \leq t  \} } \\
&=&2^{-2k} \mathbb{E}[W^k,W^k]_T\le 2^{-2k}\sup_{r\ge 1}\mathbb{E}[W^r,W^r]_T\rightarrow 0
\end{eqnarray*}
as $k\rightarrow \infty$. Therefore, $\lim_{k\rightarrow \infty}[W^k , A^k] = 0$ strongly in $\textbf{B}^2$ over $[0,T]$ so that Lemma \ref{carc_W} yields $\lim_{k\rightarrow \infty}W^k=\big(X^k - M^{k,X}\big)=0$ weakly in $\textbf{B}^2$ over $[0,T]$. The set $\{M^{k,X}; k\ge 1\}$ is $\textbf{B}^2$-weakly relatively sequentially compact where all limits points are square-integrable $\mathbb{F}$-martingales over $[0,T]$. The weak convergence $\lim_{k\rightarrow \infty}\mathbb{F}^k=\mathbb{F}$ (see Lemma 2.2 in \cite{LEAO_OHASHI2013}) yields $\lim_{k\rightarrow \infty}X^k=X$ strongly in $\textbf{B}^1$. This allows us to conclude $\lim_{k\rightarrow \infty}M^{k,X}=X$ weakly in $\textbf{B}^2$. As a consequence, $\langle X, B\rangle^\delta_t = \lim_{k\rightarrow \infty}[M^{k,X},A^k]_t = [X,B]_t$ weakly in $L^1(\mathbb{P})$ for each $t\in [0,T]$.


\section{The new proofs of Theorem 3.1 and Corollary 4.1 in \cite{LEAO_OHASHI2013}}
\subsection{New proof of Theorem 3.1 in \cite{LEAO_OHASHI2013}} Let us define $N^X:=X - X_0 - M^X$. We claim that $\langle N^X, B \rangle^\delta=0 $. Indeed, $[\delta^k N^X, A^k] = [M^{k,X} - \delta^k M^X, A^k]$. Proposition 3.2 in \cite{LEAO_OHASHI2013} yields $[M^{k,X},A^k]_t\rightarrow [M^X,B]_t$ weakly in $L^1(\mathbb{P})$ for each $t\in [0,T]$. By noticing that $[\delta^kM^X, A^k] = [M^{k,M^X},A^k]_t; 0\le t\le T$, we shall apply Lemma \ref{mainlemma} to state that $\lim_{k\rightarrow \infty}[\delta^kM^X, A^k]_t=[M^X,B]_t$ weakly in $L^1(\mathbb{P})$ for every $t\in [0,T]$. Hence, $\langle N^X, B \rangle^\delta=0$. The uniqueness of the decomposition is now just a simple consequence of the martingale representation of the Brownian motion.

\subsection{New proof of Corollary 4.1 in \cite{LEAO_OHASHI2013}} In one hand, Lemma \ref{mainlemma} yields $\langle X, B\rangle^\delta = [X,B]$ for every $X\in \textbf{H}^2$. On the other hand, Theorem 4.1 in \cite{LEAO_OHASHI2013} yields $X_t = \int_0^t\mathcal{D}X_sdB_s; 0\le t\le T$. Representation (4.9) in \cite{LEAO_OHASHI2013} is then a simple consequence of the definition of $\mathcal{D}^k X$.

\section{Final remarks on Lemma 3.4 and 4.1 in \cite{LEAO_OHASHI2013}}
There are also minor modifications in the proofs of Lemma 3.4 and 4.1 in \cite{LEAO_OHASHI2013} due to the false statement written in Remark 2.2.

\textbf{Lemma 3.4 in \cite{LEAO_OHASHI2013}}: In the proof of Lemma 3.4 in \cite{LEAO_OHASHI2013}, there is a bad argument just below (3.9) in \cite{LEAO_OHASHI2013}. We wrote $\sup_{0\le t\le T}|H^k_t - H^k_{t-}| = \max_{n\ge 1}|H^k_{T^k_n} - H^k_{T^k_{n-1}}| 1\!\!1_{\{T^k_n \le T\}}$, which is not true due to Corollary~\ref{cormart}. However, the new argument is very simple:

\begin{eqnarray*}
|H^k_t- H^k_{t-\epsilon}|&\le& |H^k_t - H_t| + |H_t - H_{t-\epsilon}| + |H_{t-\epsilon} - H^k_{t-\epsilon}|\\
& &\\
&\le& 2\sup_{0\le u\le T}|H^k_u - H_u| + |H_t - H_{t-\epsilon}|~a.s,
\end{eqnarray*}
for $0\le t\le T$ and $\epsilon>0$. Therefore, $\sup_{0\le t\le T}|H^k_t - H^k_{t-}|\le 2\sup_{0\le u\le T}|H^k_u - H_u|\rightarrow 0$ in probability as $k\rightarrow\infty$ due to Lemma 2.2 (ii) in \cite{LEAO_OHASHI2013}.

\textbf{Proof of Lemma 4.1 in \cite{LEAO_OHASHI2013}}: Let $X_t = \mathbb{E}[g|\mathcal{F}_t]; 0\le t\le T$. By the very definition, we have $\delta^kX_{T^k_n} = \mathbb{E}[g|\mathcal{G}^k_n];n\ge 0$. Do the same splitting as in equation (4.2) in \cite{LEAO_OHASHI2013}. Since $X$ is bounded, we know that $\lim_{k\rightarrow \infty}\delta^kX = X$ strongly in $\textbf{B}^p$ for every $p\ge 1$, so that the first and last terms vanish in equation (4.2) in \cite{LEAO_OHASHI2013} . The second term in equation (4.2) in \cite{LEAO_OHASHI2013} also vanishes due to the path continuity of $X$ and the fact
$\lim_{k\rightarrow \infty}\mathbb{E}\max_{n\ge 1}|\Delta T^k_n| 1\!\!1_{\{T^k_n\le T\} }=0.$

\begin{remark}
Identity (\ref{ident1}) and Corollary \ref{cormart} imply that, in general, equation (3.4) in \cite{LEAO<OHASHI2013.1} only holds at the stopping times $(T^k_n)_{n\ge 0}$. Lemma 10 and Theorem 11 in \cite{LEAO<OHASHI2013.1} is then a simple consequence of Lemma \ref{mainlemma} in the multi-dimensional case.
\end{remark}

\end{document}